

\documentclass[12pt]{smfart}
\usepackage{color}
\usepackage{amssymb,verbatim}
\usepackage{amsthm,array,amssymb,amscd,amsfonts,amssymb,latexsym, url}
\usepackage{amsmath}
\usepackage[all]{xy}
\usepackage[french]{babel}
\usepackage{times}

\definecolor{labelkey}{rgb}{1,0,0}

\newcounter{spec}
{\end{list}}

\renewcommand{\P}{{\mathbf P}}








\newcommand{\Z}{{\mathbb Z}}
\newcommand{\Q}{{\mathbb Q}}

\newcommand{\oi}{\hskip1mm {\buildrel \simeq \over \rightarrow} \hskip1mm}

\newcommand{\ovk}{{\overline k}}

\newcommand{\ovX}{{\overline X}}

\newcommand{\Br}{{\operatorname{Br}}}

\newcommand{\by}[1]{\overset{#1}{\longrightarrow}}
\newcommand{\iso}{\by{\sim}}
\renewcommand{\lim}{\varprojlim}

\renewcommand{\phi}{\varphi}

\numberwithin{equation}{section}

\newfont{\gothic}{eufb10}


\newtheorem{prop}{Proposition}[section]
\newtheorem{thm}[prop]{Th\'eor\`eme}
\newtheorem{lem}[prop]{Lemme}
\newtheorem{cor}[prop]{Corollaire}

\newtheorem{rem}[prop]{Remarque}

\newcommand{\bthe}{\begin{theo}}
\newcommand{\ble}{\begin{lem}}
\newcommand{\bpr}{\begin{prop}}
\newcommand{\bco}{\begin{cor}}
\newcommand{\bde}{\begin{defi}}
\newcommand{\ethe}{\end{theo}}
\newcommand{\ele}{\end{lem}}
\newcommand{\epr}{\end{prop}}
\newcommand{\eco}{\end{cor}}
\newcommand{\ede}{\end{defi}}

\newcommand{\Gal}{{\rm{Gal}}}

\newcommand{\F}{{\mathbb F}}

 \def\br{{\rm Br}}
 \def\pic{{\rm Pic}}
 \def\ov{\overline}
\def\gal{{\rm Gal}}
\def\A{{\bf A}}
\def\G{{\bf G}}

%

\DeclareFontFamily{U}{wncy}{}
\DeclareFontShape{U}{wncy}{m}{n}{%
<5>wncyr5%
<6>wncyr6%
<7>wncyr7%
<8>wncyr8%
<9>wncyr9%
<10>wncyr10%
<11>wncyr10%
<12>wncyr6%
<14>wncyr7%
<17>wncyr8%
<20>wncyr10%
<25>wncyr10}{}
\DeclareMathAlphabet{\cyr}{U}{wncy}{m}{n}

 \begin{document}

\title[Groupe de Brauer non ramifi\'e d'espaces homog\`enes de tores]{Groupe de Brauer non ramifi\'e d'espaces homog\`enes de tores}
\author{Jean-Louis Colliot-Th\'el\`ene}
\address{C.N.R.S., Universit\'e Paris Sud\\Math\'ematiques, B\^atiment 425\\91405 Orsay Cedex\\France}
\email{jlct@math.u-psud.fr}
\date{12 octobre 2012}

\maketitle



\begin{altabstract}
 Let $k$ be a field, $X$ a smooth, projective $k$-variety. If $X$ is geometrically rational, 
there is an injective map from the quotient of Brauer groups $\Br(X)/\Br(k)$ into the first Galois cohomology
group of the lattice given by the geometric Picard group. In this note, where the main attention is
on smooth compactifications of homogeneous spaces of algebraic $k$-tori, we show how under some hypotheses
 the map is onto, and how one may in some special case  exhibit concrete generators in $\Br(X)$.
This is applied to the analysis of counter\-examples to the local-global principle for norms in 
 biquadratic extensions of number fields.
\end{altabstract}
 
 
 \section*{Introduction}
 
 Soient $k$ un corps de caract\'eristique z\'ero,
  $\ovk$ une cl\^oture alg\'ebrique de $k$
et $g=\Gal(\ovk/k)$.
 Soient $X$ une $k$-vari\'et\'e projective, lisse, g\'eom\'etriquement connexe
et  $\ovX=X\times_{k}\ovk$.

On s'int\'eresse au groupe de Brauer $\br(X)$, qu'on peut aussi d\'efinir
comme le groupe de Brauer non ramifi\'e $\br_{nr}(k(X))=\br_{nr}(k(X)/k)$ du corps
des fonctions de $X$. 

Le groupe de Brauer alg\'ebrique $\br_{a}(X)$ est par d\'efinition le noyau
de la restriction
$\br X \to \br(\ovX)$.
Si $\ovX$ est $\ovk$-birationnelle \`a un espace projectif, alors $\br(\ovX)=0$,
et $\br_{a}(X)= \br(X)$.

On sait (voir \cite[(1.5.0)]{descenteII}) que le groupe $\br_{a}(X)$ s'ins\`ere dans une suite exacte 
$$0 \to  \pic(X) \to \pic(\ovX)^g \to \br(k) \to \br_{a}(X) \to H^1(g,\pic(\ovX)) \to H^3(k,\G_{m}).$$
D\'eterminer si une classe dans $H^1(g,\pic(\ovX)) $ se rel\`eve dans
$\br_{a}(X) $ est un premier probl\`eme. Quand on sait qu'il existe un rel\`evement,
exhiber un \'el\'ement concret dans $\br_{a}(X) \subset \br(X) \subset \br(k(X))$ relevant la classe,
et permettant des calculs num\'eriques,
peut \^etre un probl\`eme, quand tout ce que l'on  conna\^{\i}t explicitement est
un ouvert non vide   de $X$,
mais pas la $k$-vari\'et\'e $X$ elle-m\^{e}me.

Si $X$ poss\`ede un point $k$-rationnel,  la suite ci-dessus
 donne un isomorphisme
 $$  \pic(X) \oi \pic(\ovX)^g$$
et  une suite exacte courte
$$ 0 \to  \br(k) \to \br_{a}(X) \to H^1(g,\pic(\ovX)) \to 0.$$
 M\^eme dans ce cas, et m\^eme si l'on s'est donn\'e
 le $k$-point rationnel, si l'on ne conna\^{i}t pas $X$ de fa\c con explicite,
trouver un rel\`evement concret d'un \'el\'ement
 de $H^1(g,\pic(\ovX))$ peut aussi  \^{e}tre un probl\`eme.

\medskip

Un exemple   d'une telle situation se pr\'esente lorsque
l'on \'etudie les compactifications lisses $T^c$, resp. $E^c$,
d'un $k$-tore $T$, resp. d'un espace homog\`ene principal
$E$ de $T$.

\medskip

Un cas typique est le suivant (voir \cite{casselsfrohlich}, \cite{sansuc}, \cite{wei}).

On consid\`ere une extension biquadratique $K/k$, un \'el\'ement $\gamma \in k^*$
et la $k$-vari\'et\'e $E$ d\'efinie par  l'\'equation
$$N_{K/k}(\Xi)=\gamma,$$
o\`u $N_{K/k}$ d\'esigne la norme de $K$ \`a $k$.
Pour $\gamma=1$,  ceci d\'efinit  le $k$-tore $T=R^1_{K/k}\G_{m}$. On sait que l'on a $\br(T^c)/\br(k)=\Z/2$.
Mais on ne sait pas \'ecrire de fa\c con naturelle  un g\'en\'erateur dans $\br(T^c)$
(voir la remarque \ref{difficilebiquad}).

Pour $\gamma$ quelconque, on a $\br(E^c)/\br(k) \subset \Z/2$, 
et on n'a pas de fa\c con syst\'e\-ma\-tique pour d\'ecider si le quotient vaut $\Z/2$ et si c'est le cas trouver un repr\'esentant du  g\'en\'erateur dans $\br(E^c)$.  

\medskip

Le but de cette note est de montrer comment dans certains cas on peut contourner
ces probl\`emes.

Le cas consid\'er\'e aux  paragraphes 4 et 5 permet \`a de la Bret\`eche et
Browning \cite{bretechebrowning} de donner des estimations asymptotiques
sur le nombre de contre-exemples au principe de Hasse.

\medskip

Dans tout l'article, $k$ est un corps de caract\'eristique z\'ero,  $\ovk$ est une cl\^oture alg\'ebrique de $k$
et $g=\Gal(\ovk/k)$.

\section{Vari\'et\'es qui deviennent rationnelles apr\`es une extension cyclique du corps de base}

Dans cette section nous d\'ecrivons un cas o\`u l'application
$$ \br_{a}(X) \to H^1(k,\pic(\ovX))$$ est surjective, que $X$ ait un point rationnel ou non.
On dit qu'une $k$-vari\'et\'e int\`egre est $k$-rationnelle si son corps des fonctions
est transcendant pur sur $k$.

\begin{prop}\label{cyclique}
Soit  $X$ une $k$-vari\'et\'e projective lisse g\'eom\'etriquement connexe.
Supposons qu'il existe une extension finie cyclique $K/k$  et une $K$-vari\'et\'e int\`egre $Y$
 telle que
   $Y \times_{K}X_{K}$ soit une $K$-vari\'et\'e $K$-rationnelle.
Alors :

(a)  On a une suite exacte
$$ \br(k) \to \br(X) \to H^1(k,\pic(\ovX)) \to 0.$$

(b) On a une suite exacte
$$Ker[ \br(k)\to \br(K)] \to \ker[ \br(X) \to \br(X_{K})] \to H^1(k,\pic(\ovX)) \to 0.$$

(c)
Tout \'el\'ement de 
$H^1(k,\pic(\ovX)) $ se rel\`eve en un \'el\'ement du sous-groupe
$\br(X) \subset \br(k(X))$
 qui s'\'ecrit $(\chi,f)$ avec $$f \in \ker [ k(X)^{\times}/N_{K/k}(K(X)^{\times}) \to  {\rm Div}(X)/N_{K/k}({\rm Div} X_{K})]$$
et $\chi \in H^1(K/k,\Q/\Z) \subset H^2(k,\Z)$.
\end{prop}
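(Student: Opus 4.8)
The plan is to prove (c) directly; parts (a) and (b) then follow formally. Throughout I put $M=\pic(\ovX)$, $h=\Gal(\ovk/K)$, $C=\Gal(K/k)=\langle\sigma\rangle$, $N=\ovk(X)^{*}/\ovk^{*}$, and I let $\chi$ and $\delta\chi\in H^2(k,\Z)$ be as in the statement. First I would extract the geometric input. Since $Y\times_{K}X_{K}$ is $K$-rationnelle it has a $K$-point; projecting to the two factors shows $X_{K}(K)\neq\emptyset$ (fix $x_{0}\in X_{K}(K)$) and $Y(K)\neq\emptyset$. Being a factor of a rationally connected variety, $\ovX$ is rationally connected, so $\br(\ovX)=0$ (whence $\br(X)=\br_{a}(X)$) and $M$ is a $g$-lattice. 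Moreover $\pic(\ov Z)=\pic(\ov Y)\oplus M$ as $h$-modules for a smooth projective model $Z$ of $Y\times_{K}X_{K}$, because $\ov Y,\ovX$ are rationally connected; as $Z$ is $K$-rational with a $K$-point, $H^{1}(K,\pic\ov Z)=\br(Z)/\br(K)=0$, and $M$ being a direct summand I obtain the key vanishing $H^{1}(h,M)=0$.

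Next I would set up two cohomological inputs over $g$. From $0\to N\to{\rm Div}(\ovX)\to M\to 0$, together with $H^{1}(g,{\rm Div}\,\ovX)=0$ (the permutation module ${\rm Div}(\ovX)$ gives $H^{1}(g,\Z[g/g_{D}])=\Hom_{\mathrm{cont}}(g_{D},\Z)=0$ by Shapiro), I get an injection $\iota\colon H^{1}(g,M)\hookrightarrow H^{2}(g,N)$ whose image is the kernel of $H^{2}(g,N)\to H^{2}(g,{\rm Div}\,\ovX)$; the latter map records the residues, so landing in $\mathrm{im}\,\iota$ is exactly unramifiedness. The second input is the cyclic periodicity for $N$: tensoring the two basic sequences of $C$-modules $0\to\Z\to\Z[C]\to I\to 0$ and $0\to I\to\Z[C]\to\Z\to 0$ with $N$ and using Shapiro ($H^{i}(g,\Z[C]\otimes N)=H^{i}(h,N)$), the vanishing $H^{1}(h,N)=0$ forces the connecting map $H^{0}(g,N)\to H^{1}(g,I\otimes N)$ to be onto, which yields exactness of $H^{0}(g,N)\xrightarrow{\cup\delta\chi}H^{2}(g,N)\xrightarrow{\operatorname{res}}H^{2}(h,N)$, i.e. $\ker[\operatorname{res}]=\delta\chi\cup H^{0}(g,N)$. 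Here $H^{0}(g,N)=k(X)^{*}/k^{*}$ by Hilbert 90, and $H^{1}(h,N)=0$ again by Hilbert 90 together with the injectivity of $\br(K)\hookrightarrow H^{2}(h,\ovk(X)^{*})$ that $x_{0}\in X_{K}(K)$ provides.

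Finally I would assemble the lift. Given $\alpha\in H^{1}(g,M)$, naturality of $\iota$ under restriction gives $\operatorname{res}\iota(\alpha)=\iota_{K}(\operatorname{res}\alpha)$, which vanishes since $H^{1}(h,M)=0$; hence $\iota(\alpha)\in\ker[\operatorname{res}]=\delta\chi\cup H^{0}(g,N)$, say $\iota(\alpha)=\delta\chi\cup\bar f$ with $f\in k(X)^{*}$. Lifting $\bar f$ to $f\in H^{0}(g,\ovk(X)^{*})=k(X)^{*}$ and forming the cyclic class $(\chi,f)=\delta\chi\cup f\in H^{2}(g,\ovk(X)^{*})=\br_{1}(k(X))$, its image under $q_{*}\colon H^{2}(g,\ovk(X)^{*})\to H^{2}(g,N)$ is $\delta\chi\cup\bar f=\iota(\alpha)$. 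Since $\iota(\alpha)$ dies in $H^{2}(g,{\rm Div}\,\ovX)$, the class $(\chi,f)$ is unramified, so lies in $\br(X)$, and the vanishing reads $\delta\chi\cup\div(f)=0$, that is $\div(f)\in N_{K/k}({\rm Div}\,X_{K})$ — precisely $f\in\ker[k(X)^{*}/N_{K/k}K(X)^{*}\to{\rm Div}(X)/N_{K/k}({\rm Div}\,X_{K})]$. As the map $\br(X)\to H^{1}(g,M)$ is $\iota^{-1}\circ q_{*}$, it sends $(\chi,f)$ to $\alpha$, proving (c). Then (a) is the resulting surjectivity combined with exactness at $\br_{a}(X)$ in the sequence of the introduction, and (b) follows because $(\chi,f)$ is split by $K$ (so lies in $\ker[\br(X)\to\br(X_{K})]$) while $x_{0}\in X_{K}(K)$ identifies the constant part with $\ker[\br(k)\to\br(K)]$. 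The main obstacle is the second input: proving $H^{1}(h,N)=0$ and deducing the exact equality $\ker[\operatorname{res}]=\delta\chi\cup H^{0}(g,N)$ rather than a mere inclusion, since it is this cyclic periodicity — valid only thanks to the $K$-point — that lets every class be written as a single cyclic algebra $(\chi,f)$; matching residue vanishing with the norm condition is the remaining technical point.
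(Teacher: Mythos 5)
Your architecture is sound and genuinely different from the paper's. The paper proves (a) and (b) first, by comparing the Hochschild--Serre spectral sequences for $K/k$ and $\ovk/k$: cyclicity plus Hilbert 90 kill $H^3(G,K^{\times})$, the hypothesis supplies the $K$-point and $H^1(h,\pic(\ovX))=0$, whence $H^1(G,\pic(X_K))\oi H^1(g,\pic(\ovX))$, and surjectivity drops out of the diagram; part (c) is then obtained by citing \cite[Lemme 14]{ret}, which identifies $\ker[\br(X)\to\br(X_K)]$ with $\ker[H^2(G,K(X)^{\times})\to H^2(G,{\rm Div}(X_K))]$, followed by periodicity of cyclic cohomology at the finite level $G$. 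You instead prove (c) first, working over $g$ with $0\to N\to {\rm Div}(\ovX)\to M\to 0$ and establishing $\ker[H^2(g,N)\to H^2(h,N)]=\delta\chi\cup H^0(g,N)$ via the two $\Z[C]$-sequences and $H^1(h,N)=0$; this amounts to re-proving the content of \cite[Lemme 14]{ret} in a $g$-equivariant form, and it buys an explicit construction from which (a) and (b) follow formally. Both proofs rest on the same pillars (periodicity for the cyclic group, Hilbert 90 plus the $K$-point, and $H^1(h,M)=0$ coming from the stable rationality hypothesis), so the difference is one of organization rather than of underlying mechanism; but your route is self-contained where the paper delegates to the literature.

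Two points need repair. First, a genuine error: you justify $\br(\ovX)=0$ by ``factor of a rationally connected variety, hence rationally connected, hence $\br=0$''. Rational connectedness does \emph{not} imply vanishing of the Brauer group: the Artin--Mumford threefolds are unirational, hence rationally connected, with nontrivial Brauer group. What is true, and what the paper uses, is that $\ovX$ is a direct factor of a \emph{rational} variety: the projection $\ov{Y}'\times\ovX\to\ovX$ has a section, so $\br(\ovX)$ injects into $\br(\ov{Y}'\times\ovX)$, which vanishes by birational invariance since $\ov{Y}'\times\ovX$ is birational to $\P^n_{\ovk}$. You already use this product/retraction structure for $\pic$ (where, incidentally, you should take $Z=Y'\times_K X_K$ with $Y'$ a smooth projective model of $Y$, not an arbitrary smooth projective model of $Y\times_K X_K$), so the fix is immediate, but the step as written is false. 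Second, the two compatibilities you assert are exactly the technical heart of (c) and cannot simply be declared: (i) that a class of $H^2(g,\ovk(X)^{\times})$ lies in $\br(X)$ if and only if its image in $H^2(g,{\rm Div}(\ovX))$ vanishes (the comparison of Galois-theoretic residues with the residues defining $\br_{nr}$) --- this is precisely what the paper outsources to \cite[Lemme 14]{ret}, and it deserves a proof or a citation (e.g.\ \cite{descenteII}); and (ii) that $\delta\chi\cup\div(f)=0$ in $H^2(g,{\rm Div}(\ovX))$ forces $\div(f)\in N_{K/k}({\rm Div}(X_K))$ --- the converse is trivial, but this direction requires the injectivity of the inflation $H^2(G,{\rm Div}(X_K))\to H^2(g,{\rm Div}(\ovX))$, checked on each permutation component via Shapiro. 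With these two points supplied, your proof is complete and correct.
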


\begin{proof} Soit $G=\gal(K/k)$. Fixons un plongement $K \subset \ov{k}$,
d'o\`u une application quotient surjective $g \to G$.
On compare les suites exactes d\'eduites des suites spectrales
$$E_{2}^{pq} = H^p(G, H^q(X_{K},\G_{m})) \Longrightarrow H^n(X,\G_{m})$$
et
$$E_{2}^{pq} = H^p(g, H^q(\ovX,\G_{m})) \Longrightarrow H^n(X,\G_{m}).$$

On a donc le diagramme commutatif de suites exactes
 \begin{equation}
 \begin{CD}
\ker[\br(k) \to \br(K)] & \to  & \ker[\br(X) \to \br(X_{K})]  & \to  & H^1(G,\pic(X_{K}))& \to  & H^3(G,K^{\times}) \\
  \downarrow&&  \downarrow &  &\downarrow &   &\downarrow &  &  \\
  \ker[\br(k) \to \br(\ovk)] & \to  &  \ker[\br(X) \to \br(\ovX)] & \to  &  H^1(g,\pic(\ovX)) & \to  &  H^3(g,\ovk^{\times})
 \end{CD}
\end{equation} 
 Comme $G$ est cyclique, on a $H^3(G,K^{\times}) \simeq H^1(G,K^{\times})$ et ce dernier
groupe est nul d'apr\`es le th\'eor\`eme 90 de Hilbert.

Soit $h=\gal({\ovk}/K)$. L'hypoth\`ese faite sur  la $K$-vari\'et\'e $X_{K}$ implique qu'elle poss\`ede un
$K$-point et que le $h$-module $\pic(\ovX)$ est un facteur direct d'un module de permutation  (\cite[Prop. 2.A.1]{descenteII}).
Le premier fait implique $\pic(X_{K}) \oi \pic(\ovX)^h$. Le second implique $H^1(h,\pic(\ovX) )=0$.
La suite exacte naturelle
$$0 \to H^1(G,\pic(\ovX)^h) \to H^1(g,\pic(\ovX) )\to H^1(h,\pic(\ovX) )$$
devient un isomorphisme
$$ H^1(G,\pic(X_{K})) \oi H^1(g,\pic(\ovX)).$$
L'hypoth\`ese faite sur $X_{K}$ implique qu'il existe un entier $n>0$ et une $\ov{k}$-application rationnelle dominante
de $\P^{n}_{\ov{k}}$ vers $\ov{X}$ qui admet une section sur un ouvert de $\ov{X}$. Ceci suffit \`a assurer $\br(\ov{X})=0$.
On a ainsi \'etabli les \'enonc\'es (a) et (b). En ce qui concerne (c), on sait que pour toute extension galoisienne $K/k$
de groupe $G$,
on a un isomorphisme naturel (voir \cite[Lemme 14, p.~213]{ret})
$$ \ker[\br(X) \to \br(X_{K})]  \oi \ker[ H^2(G,K(X)^{\times}) \to H^2(G,{\rm Div}(X_{K}))].$$
Pour $K/k$ cyclique, la p\'eriodicit\'e de la cohomologie   des groupes finis cycliques
associe au choix d'un g\'en\'erateur de $G$ un isomorphisme entre le dernier groupe
et 
 $$\ker[ \hat{H}^{0}(G,K(X)^{\times}) \to \hat{H}^{0}(G,{\rm Div}(X_{K})]),$$
c'est-\`a-dire
 $$\ker [ k(X)^{\times}/N_{K/k}(K(X)^{\times}) \to  {\rm Div}(X)/N_{K/k}({\rm Div} X_{K})].$$
\end{proof}

\begin{rem}\label{difficilebiquad}
{\rm   Le tore $T=R^1_{L/k}\G_{m}$ associ\'e \`a une extension biquadratique $L=k(\sqrt{a},\sqrt{b})$ de $k$,
 apr\`es extension du corps de base de $k$ \`a  $K=k(\sqrt{a})$, est $K$-isomorphe \`a la $K$-vari\'et\'e d\'efinie par
 $$(x^2-by^2)(u^2-bv^2)=1.$$
 Cette $K$-vari\'et\'e est $K$-isomorphe \`a la $K$-vari\'et\'e affine d'\'equation
 $$ x^2-by^2 = z^2-bt^2 \neq 0.$$
Cette vari\'et\'e est clairement $K$-rationnelle : elle est 
 $K$-birationnelle au produit d'une droite et d'une quadrique lisse dans $\P^3_{K}$,
quadrique poss\'edant un point $K$-rationnel, et donc $K$-birationnelle \`a $\P^2_{K}$.

La proposition  s'applique. On a
$H^1(G,\pic(T^{c}_{K}))=H^1(g,\pic(T^{c}_{\ov{k}}))$ et un argument abstrait (\cite[Prop.~9.5]{ctsansuc})
montre que ce dernier groupe est isomorphe \`a $\Z/2$. Suivre cet isomorphisme n'est pas simple.
Mais surtout,  faute de bien conna\^{\i}tre
une compactification lisse explicite $T^c$ de $T$, on ne conna\^{i}t pas de fac\c on explicite
la suite exacte de $G$-modules
$$ 1 \to K(T^{c})^{\times}/K^{\times} \to {\rm Div}(T^c_{K}) \to  \pic(T^{c}_{K}) \to 0,$$
et donc on ne voit pas comment calculer l'image de l'application de groupes de cohomologie de Tate
$$\Z/2 = \hat{H}^{-1}(G,\pic(T^{c}_{K})) \to \hat{H}{^0}(G, K(T^{c})^{\times}/K^{\times}).$$

Ainsi  il n'est pas \'evident d'exhiber une fonction $f \in k(T)^{\times}$ telle que le g\'en\'erateur 
 de  $\br(T^c)/\br(k)=\Z/2$ soit donn\'e par $(K/k,f)$.}
 \end{rem}

\section{R\'eduction des tores aux tores coflasques}

On a une dualit\'e bien connue entre les $k$-tores alg\'ebriques et les
$g$-r\'eseaux de type fini, associant \`a un $k$-tore $T$ son groupe des caract\`eres $\hat{T}$ sur $\ovk$.
 Pour tout $k$-tore $T$  il existe une (plus petite) extension
finie galoisienne $K$  de $k$, telle que l'action de $g$ sur $\hat{T}$ se factorise par ${\rm{Gal}}(K/k)$.
On dit que $K/k$ d\'eploie $T$ et $\hat{T}$.

Un $k$-tore quasitrivial $P$ est un $k$-tore
dont le groupe des caract\`eres est un $g$-module de permutation.
Un tel tore est un produit de $k$-tores $R_{k_{i}/k}\G_{m}$ pour diverses
extensions s\'eparables de corps $k_{i}/k$.
Un  $k$-tore quasitrivial est $k$-isomorphe \`a un ouvert d'un espace affine $\A^d_{k}$,
et est donc une $k$-vari\'et\'e $k$-rationnelle.  Un tel $k$-tore $P$
satisfait $H^1(g,\hat{P})=0$ et $H^1(k,P)=0$ (lemme de Shapiro et th\'eor\`eme 90 de Hilbert).

Un $k$-tore $Q$ est dit coflasque (voir \cite{ctsansuc})
si pour tout sous-groupe  ouvert $h \subset g$ on a $H^1(h,\hat{Q})=0$. 

 \'Etant donn\'e un $g$-r\'eseau $M$ (groupe ab\'elien libre de type fini \'equip\'e d'une action
 continue discr\`ete de $g$), on d\'efinit
 $\cyr{X}^2_{\omega}(k,M) \subset H^2(g,M)$ comme le sous-groupe  de $H^2(g,M)$ form\'e
 des classes dont la restriction \`a tout sous-groupe ferm\'e procyclique de $g$ est nulle.
 Si l'extension finie $K/k$ de groupe $G$ d\'eploie le $g$-r\'eseau  $M$, alors
 $$\cyr{X}^2_{\omega}(k,M) \simeq  \ker [ H^2(G,M) \to \prod_{\sigma \in G} H^2((\sigma),M)].$$
Pour $M$ un $g$-module de permutation,  on a $\cyr{X}^2_{\omega}(k,M) =0$.
 
 Pour $k$ un corps de nombres, $\Omega$ l'ensemble de ses places,
  et $T$ un $k$-tore, pour   tout entier naturel $i$, on d\'efinit
$$\cyr{X}^{i}(k,T): = \ker [H^{i}(k,T) \to \prod_{v\in \Omega} H^{i}(k_{v},T)].$$
 Pour $T=P$ un tore quasitrivial, il r\'esulte du lemme de Shapiro et
 de la th\'eorie du corps de classes que l'on a $\cyr{X}^{2}(k,P)=0$.
 
\begin{prop}\label{deTaQ}
Soit $T$ un $k$-tore.

(a) Il existe une suite exacte de $k$-tores
$$ 1 \to P \to Q \to T \to 1$$
avec $P$ un $k$-tore quasitrivial et $Q$ un $k$-tore coflasque.

(b) Toute telle suite exacte est g\'en\'eriquement scind\'ee, le $k$-tore $Q$
est $k$-birationnel au produit $P \times T$. Ainsi tout $k$-tore est stablement 
$k$-birationnel \`a un $k$-tore coflasque.

(c) Si l'on se donne deux   suites exactes comme en (a)
$$ 1 \to P_{1} \to Q_{1} \to T \to 1$$
et 
$$ 1 \to P_{2} \to Q_{2} \to T \to 1,$$
alors il existe un $k$-isomorphisme de $k$-tores $P_{1} \times_{k} Q_{2} \simeq P_{2} \times_{k} Q_{1}.$

(d) La suite de caract\`eres  
$$ 0 \to \hat{T} \to \hat{Q} \to \hat{P} \to 0 $$
duale de la suite en (a)   induit un isomorphisme naturel
$$\cyr{X}^2_{\omega}(k,\hat{T}) \oi \cyr{X}^2_{\omega}(k,\hat{Q}).$$

(e) Soit $T^c$, resp.  $Q^c$, une    compactification lisse du  $k$-tore $T$,
resp. du $k$-tore $Q$. La projection $Q \to T$ induit un isomorphisme $\br(T^{c}) \oi \br(Q^{c})$.

(f) Si $k$ est un corps local, tout espace principal homog\`ene sous
un $k$-tore coflasque $Q$ est trivial. Ainsi $H^1(k,Q)=0$.

(g)  Si $k$ est un corps de nombres, on a des isomorphismes
$$\cyr{X}^1(k,Q) \oi \cyr{X}^1(k,T)$$
et
$$ \cyr{X}^1(k,Q) = H^1(k,Q).$$
\end{prop}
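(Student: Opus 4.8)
La strat\'egie est de traiter s\'epar\'ement les deux assertions, en s'appuyant sur le fait (f) place par place et sur l'annulation $\cyr{X}^2(k,P)=0$ pour le tore quasitrivial $P$ de la suite exacte (a). Aucune de ces deux propri\'et\'es n'a \`a \^etre red\'emontr\'ee : la premi\`ere est le point (f), la seconde figure d\'ej\`a dans le texte comme cons\'equence du lemme de Shapiro et de la th\'eorie du corps de classes. Tout l'enjeu sera donc de relier correctement les groupes de cohomologie globale \`a leurs analogues locaux.

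Pour l'\'egalit\'e $\cyr{X}^1(k,Q)=H^1(k,Q)$, je commencerais par v\'erifier que, pour toute place $v$ de $k$, le $k_v$-tore $Q_{k_v}$ demeure coflasque. En effet, l'action de $g$ sur $\hat{Q}$ se factorise par $G=\gal(K/k)$ pour une extension finie galoisienne $K/k$, et (apr\`es l'argument d'inflation usuel, licite car $\hat Q$ est sans torsion) \^etre coflasque \'equivaut \`a $H^1(H,\hat{Q})=0$ pour tout sous-groupe $H\subset G$. Comme le groupe de d\'ecomposition $G_w\subset G$ d'une place $w\mid v$ de $K$ est un sous-groupe de $G$ d\'eployant $Q_{k_v}$, la condition coflasque se restreint \`a $G_w$, et $Q_{k_v}$ est coflasque. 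Le fait (f), appliqu\'e au corps local $k_v$ (y compris aux places archim\'ediennes), donne alors $H^1(k_v,Q)=0$ pour tout $v$. Le produit $\prod_{v}H^1(k_v,Q)$ \'etant nul, le noyau de la localisation $H^1(k,Q)\to \prod_v H^1(k_v,Q)$ est le groupe tout entier, d'o\`u $\cyr{X}^1(k,Q)=H^1(k,Q)$.

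Pour l'isomorphisme $\cyr{X}^1(k,Q)\oi \cyr{X}^1(k,T)$, je partirais de la suite exacte de $k$-tores $1\to P\to Q\to T\to 1$ de (a). Comme $P$ est quasitrivial, on a $H^1(k,P)=0$ (th\'eor\`eme 90 de Hilbert), et la suite exacte longue de cohomologie galoisienne donne
$$0\to H^1(k,Q)\to H^1(k,T)\by{\delta} H^2(k,P),$$
ainsi que la suite analogue sur chaque compl\'et\'e $k_v$. En formant le diagramme commutatif de localisation et en prenant les noyaux des fl\`eches verticales, une chasse au diagramme fournit la suite exacte
$$0\to \cyr{X}^1(k,Q)\to \cyr{X}^1(k,T)\to \cyr{X}^2(k,P):$$
l'image de $\cyr{X}^1(k,T)$ tombe bien dans $\cyr{X}^2(k,P)$ puisque les localis\'ees de ses \'el\'ements sont nulles, et l'exactitude au terme m\'edian se v\'erifie en relevant un \'el\'ement localement trivial de $H^1(k,T)$ en un \'el\'ement de $H^1(k,Q)$ dont on contr\^ole les localis\'ees gr\^ace \`a l'injectivit\'e des $H^1(k_v,Q)\to H^1(k_v,T)$. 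Comme $\cyr{X}^2(k,P)=0$ pour $P$ quasitrivial, on conclut \`a l'isomorphisme cherch\'e. Le point le plus d\'elicat n'est ni la chasse au diagramme ni la suite exacte longue, qui sont formelles, mais la justification que l'on peut invoquer (f) \`a \emph{chaque} place : il faut s'assurer que la coflasquet\'e se transmet aux groupes de d\'ecomposition et que (f) vaut encore aux places r\'eelles. Le reste ne fait qu'assembler les deux ingr\'edients d\'ej\`a acquis, $\cyr{X}^2(k,P)=0$ et l'annulation locale $H^1(k_v,Q)=0$.
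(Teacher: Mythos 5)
Your proof addresses part (g) only, taking (a), (f) and the vanishing $H^1(k,P)=0$, $\cyr{X}^2(k,P)=0$ for quasitrivial $P$ as given --- which is exactly what the paper's own proof of (g) does --- and it is correct and follows essentially the same route: the exact sequence of (a) plus $\cyr{X}^2(k,P)=0$ yields $\cyr{X}^1(k,Q) \oi \cyr{X}^1(k,T)$, and (f) applied at every place yields $\cyr{X}^1(k,Q)=H^1(k,Q)$. The only difference is that you spell out two details the paper leaves implicit, namely that $Q_{k_v}$ remains coflasque (via decomposition subgroups) so that (f) may legitimately be invoked at each place, and the diagram chase defining the sequence of Tate--Shafarevich groups.
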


\begin{proof}
Pour (a), voir \cite[Prop. 1.3 (1.2.4) p.~158]{ctsansuc}. 
Par le th\'eor\`eme~90 de Hilbert et le lemme de Shapiro, tout torseur sur une
$k$-vari\'et\'e int\`egre sous un $k$-tore quasitrivial est g\'en\'eriquement scind\'e.
Comme en outre un $k$-tore quasitrivial est une $k$-vari\'et\'e $k$-rationnelle,
ceci \'etablit (b). 
Pour (c), voir  \cite[Lemma 0.6 (0.6.4)) p.~155]{ctsansuc}. 
On a la suite duale de groupes de caract\`eres
$$0 \to {\hat T} \to {\hat Q} \to {\hat P} \to 0,$$
qui est une suite exacte de modules galoisiens.
Comme $\hat{P}$ est un module de permutation,
on a $H^1(h, {\hat P})=0$ pour tout sous-groupe ouvert $h \subset g$
et $\cyr{X}^2_{\omega}(g,{\hat P})=0$. Une chasse au diagramme imm\'ediate donne alors (d).

Pour tout $k$-tore $T$, on a la formule $\br(T^c)/\br(k) \iso \cyr{X}^2_{\omega}(k,\hat{T})$  (\cite[Prop. 9.5]{ctsansuc}).
 L'\'enonc\'e (e)   r\'esulte alors de (d).
 
Prouvons (f).  Pour un  corps local $k$ et un $k$-tore $R$, les groupes finis
  $H^1(k,R)$ et $H^1(k,\hat{R})$ sont en dualit\'e  (Tate--Nakayama).
  Si le $k$-tore $R$ est coflasque, alors $H^1(k,\hat{R})=0$ et donc $H^1(k,R)=0$.

D\'emontrons (g). Pour $k$ un corps de nombres et $P$ un $k$-tore quasitrivial, on a 
 $H^1(k,P)=0$ et $\cyr{X}^2(k,P)=0$.     La suite exacte (a) donne donc $ \cyr{X}^1(k,Q) \oi  \cyr{X}^1(k,P) $.
  Par ailleurs (f) donne  $ \cyr{X}^1(k,Q) = H^1(k,Q).$
  \end{proof}

Nous aurons besoin de la proposition suivante.

\begin{prop}\protect{ \cite[Lemme 2.1]{cthask}}\label{rappelcthasko} 
Soient $T$ un $k$-tore et $E$ un $k$-espace principal homog\`ene sous $T$. 
Soit $T^c$ une $k$-compactification projective,  lisse, \  \'equi\-variante, de $T$.
Le produit contract\'e
$E^c:=E\times^TT^c$ est une $k$-compactification lisse \'equi\-variante de $E$.

(a) Il existe  
  un isomorphisme naturel
de modules galoisiens $$\pic(\ov{T}^c) \oi  \pic(\ov{E}^c).$$

(b) Il y a un isomorphisme $H^1(k,\pic(\ov{T}^c)) \oi  H^1(k,\pic(\ov{E}^c))$. 

(c) On a des suites exactes
$$0 \to \br(k) \to \br(T^c) \to H^1(k,\pic(\ov{T}^c)) \to 0,$$
$$  \br(k) \to \br(E^c) \to H^1(k,\pic(\ov{E}^c)) \to H^3(k,\G_{m}).$$
\end{prop}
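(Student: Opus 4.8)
Le plan consiste à tout ramener, sur la clôture algébrique, au cas du tore lui-même, puis à invoquer la suite exacte fondamentale rappelée dans l'introduction. Pour (a), on commencerait par observer que sur $\ovk$ le torseur $\ov{E}$ est trivial : comme $\ovk$ est algébriquement clos, on a $H^1(\ovk,\ov{T})=0$, donc $\ov{E}$ possède un $\ovk$-point, et le choix d'un tel point $P_{0}$ fournit une trivialisation $\ov{T}$-équivariante $\ov{E}\oi\ov{T}$. On en déduirait un isomorphisme de $\ovk$-variétés
$$\ov{E}^c=\ov{E}\times^{\ov{T}}\ov{T}^c\oi\ov{T}\times^{\ov{T}}\ov{T}^c=\ov{T}^c,$$
d'où un isomorphisme $\pic(\ov{T}^c)\oi\pic(\ov{E}^c)$.

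Le point délicat serait de vérifier que cet isomorphisme ne dépend pas du choix de $P_{0}$, donc qu'il est canonique. Deux choix diffèrent par la translation par un point $t\in\ov{T}(\ovk)$, et les deux isomorphismes de $\ov{T}^c$ obtenus diffèrent par l'automorphisme de translation $\tau_{t}$, prolongé à $\ov{T}^c$ grâce à la structure équivariante. Or $\ov{T}$ étant connexe, $\tau_{t}$ agit trivialement sur $\pic(\ov{T}^c)$ ; l'isomorphisme est donc indépendant du choix, partant canonique, et par suite automatiquement compatible à l'action de $g$. Ceci établirait (a), et (b) s'en déduirait en appliquant le foncteur $H^1(g,-)$ à l'isomorphisme de $g$-modules ainsi obtenu.

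Pour (c), on remarquerait d'abord que le tore déployé $\ov{T}$ est $\ovk$-isomorphe à une puissance $\G_{m}^{n}$, donc $\ovk$-rationnel ; par conséquent $\ov{T}^c$, et par (a) $\ov{E}^c$, est géométriquement rationnelle, de sorte que $\br(\ov{T}^c)=\br(\ov{E}^c)=0$, et donc $\br_{a}(T^c)=\br(T^c)$ et $\br_{a}(E^c)=\br(E^c)$. Comme $T^c$ possède le $k$-point donné par l'élément neutre de $T$, la version à point rationnel de la suite exacte fondamentale de l'introduction fournirait directement la première suite exacte. Pour $E^c$, qui peut n'avoir aucun $k$-point, on appliquerait la version générale de cette même suite, ce qui donnerait la seconde suite, se terminant en $H^3(k,\G_{m})$.

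La principale difficulté résiderait dans l'indépendance du choix de trivialisation au c\oe ur de (a) : elle repose sur le fait qu'un groupe algébrique connexe agit trivialement sur le groupe de Picard de toute $\ovk$-variété lisse sur laquelle il opère. Une fois cette canonicité acquise, la compatibilité galoisienne de (a), puis (b) et (c), seraient essentiellement formelles.
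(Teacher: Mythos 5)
La note elle-même ne démontre pas cette proposition : elle la cite telle quelle de \cite[Lemme 2.1]{cthask}, seule la suite exacte qui suit l'énoncé y étant ajoutée. Il n'y a donc pas de démonstration interne à laquelle comparer la vôtre ; votre argument est la démonstration standard, essentiellement celle de la source citée : trivialisation du torseur sur $\ovk$, indépendance du choix du point de base, puis les suites exactes fondamentales rappelées dans l'introduction. Vos points (b) et (c) sont corrects tels quels : pour (c), l'annulation $\br(\ov{T}^c)=\br(\ov{E}^c)=0$ par rationalité géométrique, puis la suite de l'introduction sous sa forme avec point rationnel pour $T^c$ --- qui possède le point $1 \in T(k)$ --- et sous sa forme générale pour $E^c$.

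En revanche, le principe général sur lequel vous faites reposer le c\oe ur de (a) est faux tel que vous l'énoncez. Un groupe algébrique connexe n'agit pas trivialement sur le groupe de Picard de toute variété lisse sur laquelle il opère : une variété abélienne $A$ opérant sur elle-même par translations est un contre-exemple, puisque pour $L$ ample l'application $t \mapsto [\tau_{t}^{*}L \otimes L^{-1}]$ est une isogénie de $A$ sur $\pic^0(A)$, donc non triviale. Ce qui est vrai en toute généralité, c'est la trivialité de l'action d'un groupe connexe sur le groupe de Néron--Severi ; pour conclure sur $\pic$ tout entier, il faut savoir que $\pic^0(\ov{T}^c)=0$, c'est-à-dire que $\pic(\ov{T}^c)$ est discret. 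C'est bien le cas ici, parce que $\ov{T}^c$ est projective, lisse et rationnelle (c'est une variété torique), donc $H^1(\ov{T}^c,{\mathcal O})=0$. On peut aussi l'établir de façon élémentaire : comme $\pic(\ov{T})=0$, toute classe de $\pic(\ov{T}^c)$ est représentée par un diviseur porté par le bord $\ov{T}^c \setminus \ov{T}$ ; ce bord est stable sous $\ov{T}$, et chacune de ses composantes irréductibles $D_{i}$ est préservée par chaque translation (l'image de $\ov{T} \times D_{i} \to \ov{T}^c$ est irréductible, contenue dans le bord et contient $D_{i}$, donc son adhérence est $D_{i}$) ; ainsi $\tau_{t}^{*}$ fixe tout diviseur porté par le bord, donc agit trivialement sur $\pic(\ov{T}^c)$. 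Une fois ce point corrigé, votre argument de canonicité (indépendance du choix de $P_{0}$, d'où l'équivariance galoisienne), puis (a), (b) et (c), s'enchaînent comme vous le dites.
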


\medskip

On a donc une suite exacte
$$0 \to \br(E^c)/{\rm Im}(\br(k)) \to  \br(T^c)/\br(k) \to H^3(k,\G_{m}).$$

 \section{Le tore des \'el\'ements de norme 1 d'une extension biquadratique et un tore coflasque associ\'e}

 Soit $K/k$ une extension biquadratique.
 On a donc $K=k(\sqrt{a}, \sqrt{b})$. Soit $T=R^1_{K/k}\G_{m}$ le $k$-tore noyau de la norme
 $$1 \to T \to R_{K/k}\G_{m} \to \G_{m,k} \to 1.$$
 Soit $Q$ le $k$-tore noyau d\'efini par la suite exacte
 $$1 \to Q \to \prod_{i}R_{k_{i}/k}\G_{m} \to \G_{m,k} \to 1,$$
 o\`u $k_{i}/k$ parcourt les trois sous-extensions quadratiques de $K/k$, et o\`u la fl\`eche vers $\G_{m,k}$
 est le produit des trois normes d'extensions quadratiques de $k$.

 \begin{prop}\label{coflasqueabiquad}
(a)  Le $k$-tore $Q$ est coflasque.

(b) Il existe un diagramme commutatif de groupes de type multiplicatif
\begin{equation}
 \begin{CD}
 &   &  1 &  &1 &   &1 &  &  \\
 &   &  \downarrow &  &\downarrow &   &\downarrow &  &  \\
 1 & \to  & \G_{m,k}^2 & \to &  Q  & \to & T & \to & 1 \\
&   &  \downarrow &  &\downarrow &   &\downarrow &  &  \\
 1 & \to & M & \to & \prod_{i}R_{k_{i}/k}\G_{m} & \to & R_{K/k}\G_{m} & \to & 1 \\
&   & \downarrow &  & \downarrow &   & \downarrow&   &  \\
  1 & \to & \mu_{2} & \to & \G_{m,k} & \to & \G_{m,k}  & \to & 1 \\
   &   &  \downarrow &  &\downarrow &   &\downarrow &  &  \\
   &   &  1 &  &1 &   &1 &  &  
 \end{CD}
\end{equation} 
 o\`u les deux colonnes verticales de droite sont celles d\'efinissant $Q$ et $T$,
 et la fl\`eche $\prod_{i}R_{k_{i}/k}\G_{m} \to R_{K/k}\G_{m} $ est induite par chacune
 des inclusions naturelles $R_{k_{i}/k}\G_{m} \hookrightarrow R_{K/k}\G_{m}$.
 
 (c) Notons $N_{i}=N_{k_{i}/k}(k_{i}^{\times})  \subset k^{\times}$. La suite exacte  
 $$0 \to H^1(k,Q) \to H^1(k,T) \to \br(k) \oplus \br(k)$$
 induite par la suite exacte sup\'erieure
 s'\'ecrit
 $$ 1 \to k^{\times}/N_{1}N_{2}N_{3} \to k^{\times}/N_{K/k}K^{\times} \to  \br(k) \oplus \br(k)$$
 la fl\`eche $k^{\times}/N_{1}N_{2}N_{3} \to k^{\times}/N_{K/k}K^{\times}$ \'etant induite par 
 $x \mapsto x^2$.
 
(d) On a $\cyr{X}^2_{\omega}(k,\hat{T}) \oi \cyr{X}^2_{\omega}(k,\hat{Q}) = \Z/2.$
 
 (e) Pour toute compactification lisse $E^c$ d'un espace principal homog\`ene $E$ de $Q$, on a 
 $\br(E^c)/{\rm Im}(\br(k))=\Z/2$.

 (f) Si $E$ est un espace principal homog\`ene de $T$ dont la classe dans $H^1(k,T)$ a
 une image nulle dans $\br(k) \oplus \br(k)$, i.e. si cette classe est dans l'image de
 $H^1(k,Q) \to H^1(k,T)$, alors pour toute compactification lisse $E^c$ de $E$,
  on a  $\br(E^c)/{\rm Im}(\br(k))=\Z/2$.
\end{prop}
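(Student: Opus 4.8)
\emph{Parts (a)--(c): the lattice bookkeeping.} I would first record $G=\Gal(K/k)\cong(\Z/2)^2$ and let $G_i=\Gal(K/k_i)$ be the order-two subgroup fixing $k_i$. Dualizing the defining sequence of $Q$ gives an exact sequence of $G$-modules $0\to\Z\to\bigoplus_i\Z[G/G_i]\to\hat Q\to0$ whose first map is the sum of the norm elements. As each $\Z[G/G_i]$ is a permutation module, for every subgroup $h\subseteq G$ the long exact sequence reduces the vanishing of $H^1(h,\hat Q)$ to the injectivity of $H^2(h,\Z)\to\bigoplus_i H^2(h,\Z[G/G_i])$; by Shapiro this is $\chi\mapsto(\chi|_{G_i})_i$ for $h=G$ (injective since any two $G_i$ generate $G$), and in its $i$-th slot the diagonal $\Z/2\hookrightarrow(\Z/2)^2$ for $h=G_i$, which settles~(a). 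For~(b) I would take the middle horizontal map to be induced by the inclusions $\iota_i\colon R_{k_i/k}\G_m\hookrightarrow R_{K/k}\G_m$; since $N_{K/k}$ pulled back along $\iota_i$ is the square of $N_{k_i/k}$, the bottom map $\G_{m,k}\to\G_{m,k}$ is forced to be $x\mapsto x^2$, the two right-hand squares commute, and the snake lemma produces the left column, whose terms a rank-and-character count identifies as $\mu_2$, $M$, $\G_{m,k}^2$ (the top term being split because the relevant Galois action is trivial). For~(c) I would take cohomology of the top row: $H^1(k,\G_{m,k}^2)=0$ and $H^2(k,\G_{m,k}^2)=\Br(k)\oplus\Br(k)$ by Hilbert~90, the vertical norm sequences give $H^1(k,Q)=k^\times/N_1N_2N_3$ and $H^1(k,T)=k^\times/N_{K/k}K^\times$, and naturality of the connecting maps against the squaring bottom map of~(b) shows the induced arrow is $x\mapsto x^2$.

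\emph{Part (d).} The row $1\to\G_{m,k}^2\to Q\to T\to1$ has quasitrivial kernel and, by~(a), coflasque middle term, so it is exactly of the type of Proposition~\ref{deTaQ}(a); hence \ref{deTaQ}(d) yields $\cyr{X}^2_\omega(k,\hat T)\oi\cyr{X}^2_\omega(k,\hat Q)$. To evaluate it I would use the formula $\Br(T^c)/\Br(k)\iso\cyr{X}^2_\omega(k,\hat T)$ together with the classical value $\Br(T^c)/\Br(k)=\Z/2$ recalled in Remark~\ref{difficilebiquad}.

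\emph{Part (e).} This is where I expect the real work. The key point is that $Q$ becomes a retract of a quasitrivial torus over the cyclic extension $k_1=k(\sqrt a)$: base-changing the defining sequence to $k_1$ splits $R_{k_1/k}\G_m$ as $\G_m\times\G_m$, on which the structure map to $\G_{m,k_1}$ is multiplication $(x,y)\mapsto xy$ and so has an obvious section; thus the surjection acquires a section and the quasitrivial torus $(\prod_i R_{k_i/k}\G_m)_{k_1}\cong Q_{k_1}\times\G_m$. Consequently, for any torsor $E$ under $Q$, the pushout of $E_{k_1}$ along $Q_{k_1}\hookrightarrow Q_{k_1}\times\G_m$ is, on one hand, $E_{k_1}\times\G_m$ and, on the other, a trivial torsor under the quasitrivial torus $Q_{k_1}\times\G_m$; hence $E_{k_1}\times\G_m$ is $k_1$-rational, $E_{k_1}$ is stably $k_1$-rational, and $(E^c)_{k_1}\times\P^N$ is $k_1$-rational. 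Proposition~\ref{cyclique}(a), applied to $X=E^c$ with the cyclic extension $k_1/k$, then gives $\Br(E^c)/\mathrm{Im}(\Br(k))\oi H^1(k,\pic(\ov E^c))$; by Proposition~\ref{rappelcthasko} (which lets one replace $E^c$ by the equivariant model and is insensitive to the compactification, $\Br$ being a birational invariant) this last group equals $H^1(k,\pic(\ov Q^c))=\Br(Q^c)/\Br(k)=\cyr{X}^2_\omega(k,\hat Q)$, which is $\Z/2$ by~(d).

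\emph{Part (f).} By the exact sequence of~(c), the hypothesis that the class of $E$ die in $\Br(k)\oplus\Br(k)$ means precisely that it lifts to a class in $H^1(k,Q)$, i.e. $E\cong E'\times^Q T$ for some torsor $E'$ under $Q$. Then $E'\to E=E'/\G_{m,k}^2$ is a torsor under the quasitrivial torus $\G_{m,k}^2$, hence Zariski-locally trivial, so $E'$ is $k$-birational to $E\times\A^2$ and $E$ and $E'$ are stably $k$-birational. Since $\Br(\cdot)/\mathrm{Im}(\Br(k))$ is a stable birational invariant of smooth projective $k$-varieties, part~(e) applied to the $Q$-torsor $E'$ gives $\Br(E^c)/\mathrm{Im}(\Br(k))=\Br((E')^c)/\mathrm{Im}(\Br(k))=\Z/2$. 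The main obstacle in the whole argument is the rationality input of~(e): exhibiting a cyclic extension over which the coflasque torus $Q$, and with it all of its torsors, become stably rational, which is exactly what makes Proposition~\ref{cyclique} applicable.
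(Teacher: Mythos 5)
Your proposal is correct, and its skeleton coincides with the paper's: character-lattice computations for (a)--(c), the diagram chase of Proposition \ref{deTaQ}(d) plus the classical value $\Br(T^c)/\Br(k)=\Z/2$ for (d), Proposition \ref{cyclique} applied over the quadratic (hence cyclic) extension $k(\sqrt a)$ together with Proposition \ref{rappelcthasko} and the formula $H^1(k,\pic(\ov{Q}^c))\simeq \cyr{X}^2_{\omega}(k,\hat Q)$ for (e), and lifting $E$ to a $Q$-torsor plus stable birational invariance of the unramified Brauer group for (f). The one genuine divergence is the rationality input in (e). The paper proceeds concretely: every $Q$-torsor is the affine variety $(x^2-ay^2)(z^2-bt^2)(u^2-abw^2)=\gamma$, and over $k(\sqrt a)$ the factor $x^2-ay^2$ splits into two independent linear forms, so the torsor is honestly $k(\sqrt a)$-rational. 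You argue structurally: over $k_1=k(\sqrt a)$ the norm surjection from the quasitrivial torus $\prod_i R_{k_i/k}\G_m$ acquires a group-theoretic section, whence $(\prod_i R_{k_i/k}\G_m)_{k_1}\simeq Q_{k_1}\times\G_{m,k_1}$, and pushing a $Q$-torsor forward along $Q_{k_1}\hookrightarrow Q_{k_1}\times\G_{m}$ yields a torsor under a quasitrivial $k_1$-torus, necessarily trivial; this gives only stable $k_1$-rationality, but that is all Proposition \ref{cyclique} requires. The trade-off: the paper's verification is shorter and explicit (and is of a piece with the explicit generator exhibited in the next section), while yours needs no equations and applies verbatim to any torus whose quasitrivial resolution splits after a cyclic extension, so it is the more portable argument. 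Your write-up of (a)--(c), which the paper leaves to the reader, and your treatment of (f) are correct and match the paper's intent.
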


\begin{proof}
L'\'enonc\'e (a) se voit   sur les groupes de caract\`eres.
On laisse au lecteur le soin d'\'etudier le diagramme de groupes de caract\`eres
pour identifier les termes non \'evidents dans le diagramme (b).
Une fois ceci \'etabli,  l'\'enonc\'e (c) est imm\'ediat. 
Il en est de m\^{e}me de l'\'enonc\'e (d).
Tout espace principal homog\`ene $E$  de $Q$ s'\'ecrit
 $$(x^2-ay^2)(z^2-bt^2)(u^2-abw^2)=\gamma,$$
avec $\gamma  \in k^{\times}$. Sur l'extension cyclique $k(\sqrt{a})/k$,
 une telle vari\'et\'e est $k(\sqrt{a})$-rationnelle.
 D'apr\`es la proposition \ref{cyclique}, on a donc 
 $$\br(E^c)/{\rm Im}(\br(k)) = H^1(k,\pic({\overline{E}}^c)).$$
 D'apr\`es la proposition \ref{rappelcthasko},
 on a $H^1(k,\pic({\overline{Q}}^c)) \iso H^1(k,\pic({\overline{E}}^c)).$
 Enfin, d'apr\`es (\cite[Prop. 9.5]{ctsansuc}),
  le groupe $H^1(k,\pic({\overline{Q}}^c))$ est isomorphe  \`a
$\cyr{X}^2_{\omega}(k,\hat{Q})=\Z/2.$ Ceci \'etablit (e).
Soit $E$ comme en (f), et soit $E_{1}$ un espace principal homog\`ene de $Q$
dont l'image par $H^1(k,Q) \to H^1(k,T)$ est la classe de $E$ dans ce dernier groupe.
La fibration $E_{1} \to E$
 est un torseur sour $\G_{m,k}^2$,  
  le corps des fonctions de $E_{1}$ est donc purement transcendant sur celui de $E$. 
Ceci implique que l'application 
$\br_{nr}(k(E)) \to  \  \br_{nr}(k(E_{1}))$ est un isomorphisme,
et \'etablit (f).
\end{proof}

\section{Un g\'en\'erateur explicite pour le groupe de Brauer non ramifi\'e des espaces
homog\`enes de ce tore coflasque}

\bigskip

\begin{thm}\label{generateur}
Soit $F$ un corps de caract\'eristique nulle, et soient $a,b,c \in F^{\times}$. 
Consid\'erons la $F$-vari\'et\'e $Y$ d\'efinie par  l'\'equation affine
 $$(x^2-ay^2)(z^2-bt^2)(u^2-abw^2)=c.$$
 Soit $F(Y)$ son corps des fonctions.  Soit $Z$ une $F$-compactification lisse de $Y$.

(i)  Le quotient $\br(Z)/\br(F)$ est nul si l'un des $a,b,ab$ est un carr\'e, il est \'egal \`a $\Z/2$ sinon.

(ii) L'alg\`ebre de quaternions $(x^2-ay^2,b) \in \br(F(Y))$ est non 
  ramifi\'ee sur   $Z$,
  et elle engendre le groupe $\br(Z)/\br(F)$.

\end{thm}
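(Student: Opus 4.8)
The plan is to recognise $Y$ as a principal homogeneous space of the coflasque torus $Q$ of Proposition~\ref{coflasqueabiquad} — writing $k_1=F(\sqrt a)$, $k_2=F(\sqrt b)$, $k_3=F(\sqrt{ab})$, the three factors $x^2-ay^2$, $z^2-bt^2$, $u^2-abw^2$ are the norm forms of $k_1,k_2,k_3$, so $Y$ is exactly the homogeneous space attached to $c\in F^\times$ (cf. the proof of Proposition~\ref{coflasqueabiquad}). For part (i), if $a$, $b$ or $ab$ is a square, the corresponding norm form splits as a product of two independent linear forms and the two remaining quadratic extensions coincide; solving the defining equation for one of these linear variables exhibits $F(Y)$ as a purely transcendental extension of $F$, so $Y$ is $F$-rational, $Z$ is $F$-birational to $\mathbb{P}^5_F$, and $\Br(Z)=\Br_{nr}(F(Y)/F)=\Br(F)$. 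If none of $a,b,ab$ is a square, then $K=F(\sqrt a,\sqrt b)$ is genuinely biquadratic and Proposition~\ref{coflasqueabiquad}(e) gives $\Br(Z)/\Br(F)=\mathbb{Z}/2$. This settles (i).

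For the unramifiedness in (ii), the computation is purely local and uniform. Since $b\in F^\times$, every divisorial valuation $v$ of $F(Y)$ trivial on $F$ has $v(b)=0$, so the tame symbol is $\partial_v\bigl((x^2-ay^2,b)\bigr)=\overline{b}^{\,v(x^2-ay^2)}\in\kappa(v)^\times/\kappa(v)^{\times2}$. The key inputs are the three identities $(x^2-ay^2,a)=(z^2-bt^2,b)=(u^2-abw^2,ab)=0$ in $\Br(F(Y))$, valid because each factor is visibly a norm from the relevant quadratic extension; taking residues, $\overline{a}^{\,v(x^2-ay^2)}$, $\overline{b}^{\,v(z^2-bt^2)}$ and $\overline{ab}^{\,v(u^2-abw^2)}$ are squares in $\kappa(v)$ for every $v$. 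Moreover the defining equation yields $v(x^2-ay^2)+v(z^2-bt^2)+v(u^2-abw^2)=v(c)=0$. If $v(x^2-ay^2)$ is even the residue vanishes. If it is odd, exactly one of the two remaining valuations is odd: when it is $v(z^2-bt^2)$, the second identity forces $\overline{b}$ to be a square in $\kappa(v)$; when it is $v(u^2-abw^2)$, the first and third identities make $\overline{a}$ and $\overline{ab}$ squares, hence $\overline{b}=\overline{ab}\cdot\overline{a}^{-1}$ is a square. In all cases $\partial_v\bigl((x^2-ay^2,b)\bigr)=0$, so the class lies in $\Br(Z)=\Br_{nr}(F(Y)/F)$.

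That the class generates is the remaining point. By (i) we may assume the biquadratic case, where $\Br(Z)/\Br(F)=\mathbb{Z}/2$; since $(x^2-ay^2,b)\in\Br(Z)$, it suffices to show it is not in $\Br(F)$, i.e. that it is non-constant. I would detect this by specialisation: over $L=F(t)$ the value $x^2-ay^2=t^2-a$ is a norm from $L(\sqrt a)$, and the symbol $(t^2-a,b)$ has residue $\overline{b}$ at the degree-two place $t^2=a$, whose residue field is $F(\sqrt a)$; because $ab\notin F^{\times2}$, the element $b$ is a non-square in $F(\sqrt a)$, so this residue is non-trivial and $(t^2-a,b)\neq0$. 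Comparing the value of $(x^2-ay^2,b)$ at a point where $x^2-ay^2=1$ (giving the trivial algebra) with its value where $x^2-ay^2=t^2-a$ then shows the class cannot be constant, hence generates $\Br(Z)/\Br(F)$.

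The hard part is precisely this last step. The residue computation of the previous paragraph, being the assertion that the class is \emph{unramified}, can never detect non-triviality: non-vanishing is a genuinely global fact, and one must certify it by an honest evaluation. Making the comparison rigorous — guaranteeing an $L'$-point of $Y$ carrying the chosen value of $x^2-ay^2$ and checking that the symbol $(t^2-a,b)$ survives the extension $L'/F(t)$ — is where the real work lies. Alternatively one may avoid point-evaluations altogether by invoking the isomorphism $\Br(T^c)\iso\Br(Q^c)$ of Proposition~\ref{deTaQ}(e) together with the computation $\cyr{X}^2_{\omega}(k,\hat{Q})=\mathbb{Z}/2$ of Proposition~\ref{coflasqueabiquad}(d) to pin down the generator abstractly, and then match it with $(x^2-ay^2,b)$; but this re-encounters the very difficulty described in Remarque~\ref{difficilebiquad}, so the concrete specialisation is the more promising route.
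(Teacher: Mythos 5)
Your part (i) and your unramifiedness computation in (ii) are correct; the latter is a clean variant of the paper's argument (the paper uses $\alpha=(x^2-ay^2,b)=(x^2-ay^2,ab)$ together with the evenness of $v(z^2-bt^2)$ and $v(u^2-abw^2)$ when $b$ and $ab$ are non-squares in the residue field, while you take residues of the three norm identities and use the parity relation coming from the equation; both are sound).

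The genuine gap is the step you flag yourself: you never prove that $(x^2-ay^2,b)$ does not lie in ${\rm Im}(\br(F)\to\br(F(Y)))$, and this is the actual content of the theorem --- producing a certified generator is precisely the difficulty the paper is addressing (cf.\ Remarque \ref{difficilebiquad}). Your specialisation sketch cannot stand as a proof: to evaluate at a point with $x^2-ay^2=t^2-a$ you must pass to the function field $L'$ of the residual variety $(z^2-bt'^2)(u^2-abw^2)=c/(t^2-a)$ over $F(t)$, and then everything hinges on the symbol $(t^2-a,b)$ not dying in $\br(L')$; function fields of such torsors do in general kill non-trivial constant classes, so this is exactly as hard as the original problem. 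The paper fills this gap with Witt's theorem (in Amitsur's form, \cite[Thm. 5.4.1]{gilleszamuely}): projecting $Y$ to $\A^4$ with coordinates $(x,y,z,t)$, the generic fibre is the conic $u^2-abw^2=c(x^2-ay^2)(z^2-bt^2)$, so the kernel of $\br(F(\A^4))\to\br(F(Y))$ has order at most $2$ and is generated by $\beta=(c(x^2-ay^2)(z^2-bt^2),ab)=(c,ab)+(x^2-ay^2,b)+(z^2-bt^2,a)$. It then suffices to check, by residues on $\A^4$, that neither $(x^2-ay^2,b)$ nor $(x^2-ay^2,b)-\beta$ is constant: along $x^2-ay^2=0$ the residue of $(x^2-ay^2,b)$ is $b$, a non-square in $F(\sqrt{a})(y,z,t)$; along $z^2-bt^2=0$ the residue of $(x^2-ay^2,b)$ is trivial while that of $\beta$ is $a$, a non-square in $F(\sqrt{b})(x,y,t)$. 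Combined with $\br(Z)/\br(F)=\Z/2$ from Proposition \ref{coflasqueabiquad} (e) --- which you did invoke correctly --- this proves (ii). For what it is worth, your specialisation route can be completed: fibre the residual variety over $s=z^2-bt'^2$, apply Amitsur twice (to the two conics over $F(t)(s)$) to show that $\br(F(t))\to\br(L')$ is injective, and then take the residue of $(t^2-a,b)$ at the place $t^2=a$. But this uses the same Witt--Amitsur theorem plus an extra fibration, so it is not a way around the missing ingredient.
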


\begin{proof}

Si   $a$,   $b$, ou $ab$ est un carr\'e  dans $F$, alors $Y$ est $F$-rationnelle,
et $\br(F) \iso \br_{nr}(F(Y))$.

\medskip

 Soit   $A \subset F(Y)$ un anneau de valuation discr\`ete (de rang~1)
de corps des fractions $E=F(Y)$, contenant $F$. Soit $\kappa$ le corps r\'esiduel de $A$
et $$\partial_{A} : \br(F(Y)) \to H^1(\kappa,\Q/\Z)$$
l'application r\'esidu.

On a $\alpha=(x^2-ay^2,b)= (x^2-ay^2,ab) \in \br(F(Y))$.
Si $b$ ou $ab$ est un carr\'e dans $\kappa$, alors $\partial_{A}(\alpha)=0$.
Sinon, chacune des extensions $E(\sqrt{b})/E$ et $E(\sqrt{ab})/E$ est non ramifi\'ee
et inerte de degr\'e 2 au-dessus de $A$, donc les entiers $v_{A}(z^2-bt^2)$ et $v_{A}(u^2-abv^2)$ 
sont pairs. De l'\'equation on d\'eduit $v_{A}(x^2-ay^2)$ pair (ce qui est \'evident si $a$ n'est
pas un carr\'e dans $\kappa$.).
Ainsi  $\partial_{A}(\alpha)=0$ pour tout $A$, et $\alpha \in \br_{nr}(F(Y))=\br(Z)$.

\medskip

  Supposons que ni $a$, ni $b$, ni $ab$ ne sont  des carr\'es  dans $F$.
  Montrons que la classe $\alpha \in  \br_{nr}(F(Y)) \subset \br(F(Y))$
  n'appartient pas \`a $\br(F)$.
  
Consid\'erons la projection de $Y$ vers l'espace affine $\A^4$
de coordonn\'ees $(x,y,z,t)$. La fibre g\'en\'erique est la conique
$$u^2-abw^2= c(x^2-ay^2)(z^2-bt^2).$$
D'apr\`es Witt (\cite{witt}, Satz, S. 465) (pour la g\'en\'eralisation par Amitsur, voir \cite[Thm. 5.4.1]{gilleszamuely}),
le noyau de $\br(F(\A^4)) \to \br(F(Y))$ est d'ordre au plus 2, engendr\'e par
la classe de l'alg\`ebre de quaternions $$\beta=(c(x^2-ay^2)(z^2-bt^2),ab) \in \br(F(\A^4)).$$

Pour montrer que $\alpha \in     \br(F(Y))$
  n'appartient pas \`a $\br(F)$,
il suffit donc de voir que  
la classe $(x^2-ay^2,b) \in \br(F(\A^4))$ n'appartient pas au sous-groupe engendr\'e par
$\br(F)$ et
  $$\beta=(c(x^2-ay^2)(z^2-bt^2),ab)=(c,ab)+(x^2-ay^2,b)+(z^2-bt^2,a).$$
Les hypoth\`eses assurent que $x^2-ay^2=0$ et $z^2-bt^2=0$
sont int\`egres et que $b$, resp. $a$, n'est pas un carr\'e  dans le corps r\'esiduel
de $x^2-ay^2=0$, resp. $z^2-bt^2=0$.

Le r\'esidu de $(x^2-ay^2,b) \in \br(F(\A^4))$ en $x^2-ay^2=0$ est la classe de $b$, qui est non nulle. Donc $$(x^2-ay^2,b) \notin  \br(F) \subset \br(F(\A^4)).$$

 Le r\'esidu  de  $(x^2-ay^2,b)$ en $z^2-bt^2=0$ est  nul, mais le r\'esidu de $\beta$
en $z^2-bt^2=0$ est $a$, qui est non nul. On a donc $$(x^2-ay^2,b) - \beta \notin  \br(F) \subset \br(F(\A^4)).$$

La $F$-vari\'et\'e $Y$ est un espace principal homog\`ene sous le $F$-tore $Q$ d\'efini par
 $$(x^2-ay^2)(z^2-bt^2)(u^2-abw^2)=1.$$
D'apr\`es la proposition \ref{coflasqueabiquad} (e),  on a $\br_{nr}(F(Y))/{\rm Im}(\br(F))=\Z/2$.
Ceci conclut la d\'emonstration.
\end{proof}

 \section{Une application num\'erique}
 
 \begin{prop}
Soient $k$ un corps de nombres et $a,b,c \in k^{\times}$.
Consid\'erons la $k$-vari\'et\'e $Y$ d\'efinie par  l'\'equation affine
 $$(x^2-ay^2)(z^2-bt^2)(u^2-abw^2)=c.$$
 Soit $Z$ une $k$-compactification lisse de $Y$.

(a) Pour toute place $v$ de $k$, on a $Y(k_{v}) \neq \emptyset$.

(b) Soit $\alpha \in \br(Z) \subset \br(Y)$ d\'efini par $(x^2-ay^2,b)$.
Cet \'el\'ement engendre $\br(Z)/\br(k)$.

(c) S'il existe une place $v$ telle qu'aucun des $a,b,ab$ ne soit un carr\'e dans $k_{v}$,
alors $Y(k) \neq \emptyset$.

(d) Supposons    qu'en toute place $v$ de $k$ l'un au moins des $a,b$ ou $ab$ est un carr\'e
dans le compl\'et\'e $k_{v}$, alors pour toute famille ad\'elique $\{P_{v}\} \in \prod_{v} Z(k_{v})$,
on~a $$ \sum_{v} \beta(P_{v}) =  \sum_{v, a \in k_{v}^{\times 2}} (c,b)_{v}  =   \sum_{v, a \notin k_{v}^{\times 2}} (c,b)_{v}  \in \Z/2 \subset  \Q/\Z .$$ La nullit\'e de cet \'el\'ement est alors une condition
n\'ecessaire et suffisante pour l'existence d'un $k$-point sur $Z$ et $Y$.
 \end{prop}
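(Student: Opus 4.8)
Le plan est d'exploiter que $Y$ est un espace principal homog\`ene sous le $k$-tore coflasque $Q$ de la proposition~\ref{coflasqueabiquad}, puis d'invoquer le th\'eor\`eme de Sansuc (\cite{sansuc}) selon lequel, pour une compactification lisse d'un torseur sous un $k$-tore, l'obstruction de Brauer--Manin est la seule obstruction au principe de Hasse et \`a l'approximation faible.

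Pour (a), je remarquerais que $Y$ est un torseur sous $Q$ et que $Q$ est coflasque (proposition~\ref{coflasqueabiquad}(a)); la proposition~\ref{deTaQ}(f) donne alors $H^1(k_v,Q)=0$ pour toute place $v$, donc ce torseur est localement trivial et $Y(k_v)\neq\emptyset$ (si l'un des $a,b,ab$ est un carr\'e dans $k$, la vari\'et\'e $Y$ est m\^eme $k$-rationnelle et l'assertion est triviale). L'assertion (b) est exactement le th\'eor\`eme~\ref{generateur} appliqu\'e au corps $F=k$: l'\'el\'ement $\alpha=(x^2-ay^2,b)$ engendre $\Br(Z)/\br(k)$, ce groupe \'etant $\Z/2$ ou nul suivant que $a,b,ab$ sont non carr\'es ou non.

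Pour (c), l'id\'ee est d'\'etablir qu'\`a une place $v_0$ o\`u aucun des $a,b,ab$ n'est un carr\'e, l'application $P\mapsto\inv_{v_0}\alpha(P)$ est surjective sur le groupe $\Z/2\subset\Q/\Z$. Je travaillerais sur $Y(k_{v_0})$: en un point, posant $s=x^2-ay^2$, on a $\alpha=(s,b)_{v_0}$, nul si et seulement si $s$ est une norme de $k_{v_0}(\sqrt b)$. Notons $N_a,N_b,N_{ab}\subset k_{v_0}^\times$ les sous-groupes de normes des trois extensions quadratiques. Comme $s$ est une norme de $k_{v_0}(\sqrt a)$, il parcourt $N_a$; comme $ab\notin k_{v_0}^{\times2}$, les sous-groupes $N_a$ et $N_b$ sont distincts d'indice $2$, de sorte que $N_a$ rencontre \`a la fois $N_b$ et son compl\'ementaire; et comme $a\notin k_{v_0}^{\times2}$, on a $N_bN_{ab}=k_{v_0}^\times$, ce qui permet, pour tout $s\in N_a$ donn\'e, de choisir les autres variables de sorte que le produit des trois facteurs vaille $c$. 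Ainsi $(s,b)_{v_0}$ prend ses deux valeurs. Par (a) il existe un point ad\'elique; en modifiant sa seule composante en $v_0$ on annule $\sum_v\inv_v\alpha(P_v)$, d'o\`u un point ad\'elique orthogonal \`a $\Br(Z)$, et le th\'eor\`eme de Sansuc donne $Y(k)\neq\emptyset$.

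Pour (d), le point crucial est que l'hypoth\`ese rend l'\'evaluation de $\alpha$ constante place par place. En une place $v$ avec $a\in k_v^{\times2}$, le facteur $x^2-ay^2$ peut prendre toute valeur de $k_v^\times$ tandis que les deux autres sont des normes de $k_v(\sqrt b)$; sur $Y$ l'\'equation force donc $x^2-ay^2\in c\,N_b$, d'o\`u $\inv_v\alpha=(c,b)_v$. En une place o\`u $b\in k_v^{\times2}$, on a $\alpha=(x^2-ay^2,b)=0$; en une place o\`u $ab\in k_v^{\times2}$, on \'ecrit $\alpha=(x^2-ay^2,ab)=0$. L'\'evaluation \'etant localement constante sur $Z(k_v)$ et $Y(k_v)$ \'etant dense dans $Z(k_v)$, on obtient $\inv_v\alpha=(c,b)_v$ si $a\in k_v^{\times2}$ et $0$ sinon: c'est la premi\`ere \'egalit\'e. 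La seconde r\'esulte de la loi de r\'eciprocit\'e $\sum_v(c,b)_v=0$ pour la classe globale $(c,b)\in\br(k)$. Comme la somme ne d\'epend plus du point ad\'elique, sa nullit\'e \'equivaut \`a $Z(\A_k)^{\Br(Z)}\neq\emptyset$: la n\'ecessit\'e provient de la r\'eciprocit\'e appliqu\'ee \`a l'\'evaluation en un \'eventuel $k$-point, et la suffisance du th\'eor\`eme de Sansuc joint \`a (a). La principale difficult\'e conceptuelle est l'emploi correct de ce dernier th\'eor\`eme; sur le plan technique, c'est l'analyse locale de (c) et (d) — surjectivit\'e de l'invariant en $v_0$, constance sous l'hypoth\`ese de (d) — via les relations entre $N_a,N_b,N_{ab}$.
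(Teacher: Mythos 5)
Your proposal is correct, and its overall architecture coincides with the paper's: (a) via la coflasquit\'e de $Q$ (proposition~\ref{coflasqueabiquad}(a)) et la proposition~\ref{deTaQ}(f); (b) par le th\'eor\`eme~\ref{generateur}; et pour (c), (d) la r\'eduction \`a l'obstruction de Brauer--Manin et au th\'eor\`eme de Sansuc. La seule vraie divergence est le point cl\'e de (c) : le papier identifie $Y_{k_v}$ au tore $Q_{k_v}$ (gr\^ace \`a l'existence d'un point local) puis invoque \cite[Cor.~1, p.~217]{ret} pour affirmer que $\alpha_v$, engendrant $\br(Z_{k_v})/\br(k_v)=\Z/2$, prend deux valeurs distinctes sur $Y(k_v)$; vous \'etablissez cette surjectivit\'e de l'\'evaluation \`a la main, par l'arithm\'etique des groupes de normes locaux $N_a$, $N_b$, $N_{ab}$ (trois sous-groupes d'indice $2$ distincts, $N_a\not\subset N_b$, $N_bN_{ab}=k_{v_0}^\times$), ce qui est correct et rend l'argument auto-contenu et plus \'el\'ementaire, au prix d'un calcul local que le papier \'evite par une r\'ef\'erence g\'en\'erale. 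De m\^eme, en (d), pour passer de $Y(k_v)$ \`a $Z(k_v)$ vous utilisez la densit\'e de $Y(k_v)$ dans $Z(k_v)$ et la continuit\'e de l'\'evaluation, l\`a o\`u le papier s'appuie implicitement sur l'injectivit\'e $\br(Z_{k_v})\hookrightarrow\br(k_v(Y))$ (l'identit\'e $\alpha_v=(c,b)_v$ \'etant v\'erifi\'ee au niveau du corps des fonctions) : les deux arguments sont valables. Une seule retouche : le th\'eor\`eme de Sansuc dont vous avez besoin est \cite[Cor.~8.7]{sansuccrelle}, et non la r\'ef\'erence \cite{sansuc} (expos\'e de Bordeaux) que vous citez au d\'ebut.
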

 
 \begin{proof}
 
 (a) Le $k$-tore $Q$ d\'efini par
 $$(x^2-ay^2)(z^2-bt^2)(u^2-abw^2)=1$$
  est coflasque (Prop. \ref{coflasqueabiquad}
  (a)). La $k$-vari\'et\'e $Y$ est   un espace principal homog\`ene de $Q$.
Par  la proposition
  \ref{deTaQ} (f), on a donc $H^1(k_{v},Q)=0$ pour toute place $v$ de $k$,
ce qui donne (a).

L'\'enonc\'e (b) a fait l'objet du th\'eor\`eme \ref{generateur}.

(c) Si en une place $v$ l'extension $k_{v}(\sqrt{a},\sqrt{b})$ est biquadratique,
alors l'\'el\'ement $\alpha_{v }\in \br(k_{v}(Y))$ engendre $\br(Z_{k_{v}})/\br(k_{v})=\Z/2$ d'apr\`es le th\'eor\`eme 
\ref{generateur}. Il r\'esulte alors de \cite[Cor. 1, p.~217]{ret}
que $\alpha_{v} \in \br(Z_{k_{v}})$ prend deux valeurs distinctes sur $Y(k_{v})$.
Notons que $Y(k_{v}) \neq \emptyset$ implique que $Y_{k_{v}}$ est
$k_{v}$-isomorphe au $k_{v}$-tore $Q_{k_{v}}$. Comme $\alpha$
est d'exposant 2,
ceci implique qu'il existe un
ad\`ele $P_{v} \in Z(\A_{k})$ tel que 
$$\sum_{v} \alpha(P_{v})=0.$$
Comme $\alpha$ engendre $\br(Z)/\br(k)$, il n'y a pas  d'obstruction de Brauer--Manin
pour la compactification lisse $Z$ de l'espace homog\`ene $Y$ sous le $k$-tore $Q$.
D'apr\`es Sansuc  \cite[Cor. 8.7]{sansuccrelle}), ceci implique $Y(k)\neq \emptyset$.

D\'emontrons (d).  Soit $v$ une place de $k$.
On a l'inclusion $k_{v} \subset k_{v}(Y)$.
Si $b \in k_{v}^{\times 2}$, alors $\alpha_{v}=0 \in \br(k_{v}(Y))$.
Si $ab  \in k_{v}^{\times 2}$,  alors $\alpha_{v}=0 \in \br(k_{v}(Y))$.
Si $a \in k_{v}^{\times 2}$, alors $\alpha_{v}=(c,b)_{v} \in \br(k_{v}(Y))$ est l'image de $(c,b)_{v} \in \br(k_{v}).$ Ceci donne la premi\`ere \'egalit\'e.
La seconde \'egalit\'e provient de la loi de r\'eciprocit\'e de la th\'eorie du corps de classes global.
Le dernier \'enonc\'e r\'esulte alors de (b), et du r\'esultat de Sansuc  \cite[Cor. 8.7]{sansuccrelle}).
\end{proof}
 
  \begin{rem}
  \label{Emorysuresh}
{\rm Voici une d\'emonstration directe du point (a), communiqu\'ee par V.~Suresh.
Pour toute place $v$ et  tout triplet $(a,b,c)$ d'\'el\'ements
de $k_{v}^{\times}$, et toute place $v$, l'un des trois symboles $(a,c)_{v}, (b,c)_{v},  (ab,c)_{v}$
est nul. En effet leur somme est nulle, et ils valent soit $0$ soit $1/2$.}
\end{rem}

\medskip

Il est ainsi facile de fabriquer des contre-exemples au principe de Hasse
classique pour des vari\'et\'es donn\'ees par $Y$.   
Sur le corps $\Q$, on sait bien qu'en toute place $v$ soit $13$, soit $17$, soit $13\times 17$
est un carr\'e. On prend $a=17$, $b=13$.
On cherche $c$ tel que
$$\sum_{v, 17 \notin k_{v}^{\times 2}} (c,13)_{v} \neq 0.$$
Ceci  donne
$$\sum_{p \neq 2,13, 17; \hskip2mm 17  \notin \F_{p}^{\times 2}} (c,13)_{p} \neq 0.$$
(Noter que $17$ est un carr\'e dans $\Q_{2}$
et dans $\Q_{13}$). Par la loi de r\'eciprocit\'e, la condition $17$ non carr\'e dans le corps fini  $\F_{p}$ se traduit :
$p$ est  non carr\'e dans $\F_{17}$.

Si l'on prend $c=l$ un nombre premier non carr\'e dans $\F_{17}$ et non carr\'e dans $\F_{13}$,
la somme se r\'eduit \`a $(l,13)_{l} \neq 0$. Le nombre premier $l=5$ convient.
Ainsi :
 $$(x^2-13y^2)(z^2-17t^2)(u^2-221w^2)=5$$
 est un contre-exemple au principe de Hasse, 
 et  la combinaison des propositions  \ref{deTaQ} et \ref{coflasqueabiquad} 
 montre que
 $$N_{\Q(\sqrt{13},\sqrt{17})/\Q}(\Xi)=5^2$$
est   un
  contre-exemple au principe de Hasse.

Pour des calculs similaires, on consultera Sansuc \cite{sansuc}.

\end{document}